\documentclass[11pt]{amsart}
\usepackage{pifont}
\usepackage{amsthm}
\usepackage{amsfonts}
\usepackage{amssymb}
\usepackage[mathscr]{euscript}
\usepackage[all]{xy}
\usepackage[dvips]{graphics}
\usepackage{amsmath}
\usepackage[dvips,final]{graphicx}
\usepackage[dvips]{geometry}
\usepackage{color}
\usepackage{epsfig}
\usepackage{latexsym}
\usepackage{subcaption}

\graphicspath{{diagrams/}}

\newtheorem{theorem}{Theorem}

\newtheorem{corollary}[theorem]{Corollary}

\theoremstyle{definition}

\newtheorem{example}{Example}[section]

\theoremstyle{remark}

\title[Psuedo knots and cosmetic crossings]{Pseudo knots and an obstruction to cosmetic crossings}
\author{Heather A. Dye}

\begin{document}
\begin{abstract} 
Pseudo links have two crossing types: classical crossings and indeterminate crossings. They were first introduced by Ryo Hanaki as a possible tool for analyzing images produced by electron microscopy of DNA. A normalized bracket polynomial is defined for pseudo links and then used to construct and obstruction to cosmetic crossings in classical links. 
\end{abstract}
\maketitle

\section{Introduction}

The set of psuedo knots and links was first introduced by Hanaki Ryo \cite{MR2768805}, 
to study the type of diagrams produced electron microscopy of DNA.
In these images, the over-under crossing information is often blurred; this results in 
a diagram with classical crossings and crossings where the under-over crossing information is unknown. Based on this physical 
interpretation, Ryo developed a set of Reidemeister-like moves that are not dependent on crossing type. Subsequent work by Allison Henrich explored several invariants of 
pseudo knots \cite{MR3298207}, \cite{Allison-PseudoBounds}, \cite{MR3084750}.

In this paper, we recall the definition of pseudo knots and links. Then, a modification of the bracket polynomial is defined for pseudo links. Finally, the pseudo bracket is applied to produce an obstruction to cosmetic crossings. 
A classical crossing $x$ in a  knot diagram $D$ is said to be cosmetic if $D$ is equivalent to 
the knot diagram $D'$ where $D'$ is obtained by switching the crossing $x$ from a 
positively signed crossing to a negatively signed crossing (or vice versa). 
X. S. Lin conjectured that cosmetic crossings do not exist (with limited exceptions such as Reidemeister I twists and nugatory crossings). This is problem 1.58 on Kirby's problem list \cite{kirby}.

A pseudo link diagram $D$ is a decorated immersion of $n$ oriented copies of $S^1$ with two types of crossings. 
A crossing is either a classical crossing with over-under markings or a pseudo-crossing that is marked by a solid square as shown in Figure \ref{fig:crossings}. 

\begin{figure}[htb]
\begin{subfigure}{0.32\linewidth}
\[ \begin{array}{c} \scalebox{0.5}{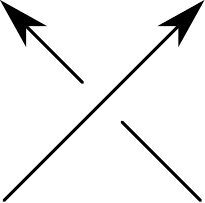}
\end{array} \]
\caption{Positive crossing}
\label{fig:poscrossing}
\end{subfigure} 
\begin{subfigure}{0.32\linewidth}
\[ \begin{array}{c} \scalebox{0.5}{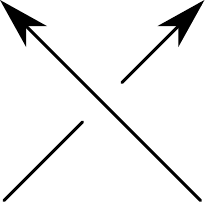}
\end{array} \]
\caption{Negative crossing}
\label{fig:negcrossing}
\end{subfigure} 
\begin{subfigure}{0.32\linewidth}
\[ \begin{array}{c} \scalebox{0.5}{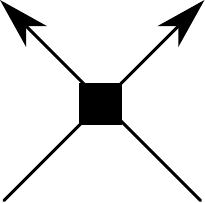}
\end{array} \]
\caption{Pseudo crossing}
\label{fig:pseudocrossing}
\end{subfigure} 
\caption{Crossing types}
\label{fig:crossings}
\end{figure}
Classical crossings follow the usual sign conventions. For a positive crossing $c$, $ \mathit{sgn} (c) = +1$ and for a negative crossing $c$, $ \mathit{sgn} (c) = -1$. 
 
Two pseudo link diagrams are  equivalent if they are related by a sequence of Reidemeister moves (Figure \ref{fig:rmoves}) and Pseudo moves (Figure \ref{fig:pmoves}).  A pseudo link is an equivalence class of pseudo link diagrams. 

\begin{figure}[htb]
\begin{subfigure}{0.48\linewidth}
\[ \begin{array}{c} \scalebox{0.5}{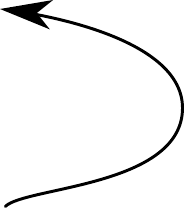}
\end{array} \leftrightarrow 
\begin{array}{c} \scalebox{0.5}{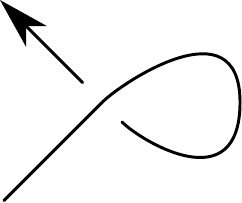} \end{array}  \]
\caption{Reidemeister I} 
\label{fig:r1move}
\end{subfigure} 
\begin{subfigure}{0.48\linewidth}
\[ \begin{array}{c} \scalebox{0.5}{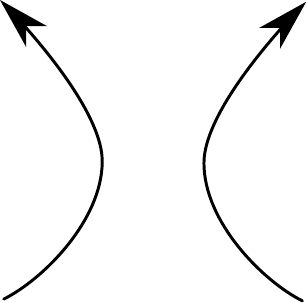}
\end{array} \leftrightarrow 
\begin{array}{c} \scalebox{0.5}{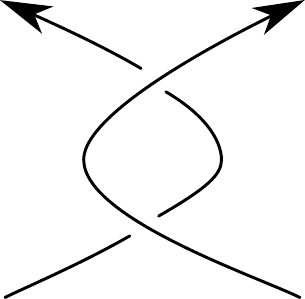} \end{array}  \]
\caption{Reidemeister II} 
\label{fig:r2move}
\end{subfigure} 
\begin{subfigure}{0.48\linewidth}
\[ \begin{array}{c} \scalebox{0.5}{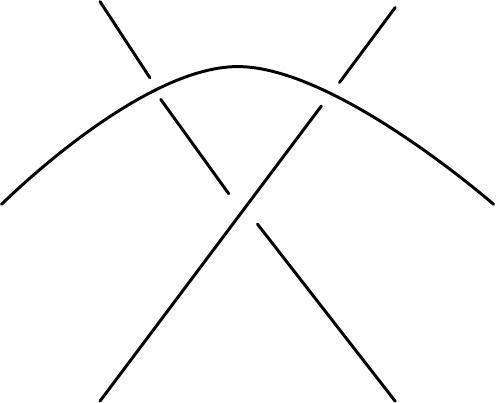}
\end{array} \leftrightarrow 
\begin{array}{c} \scalebox{0.5}{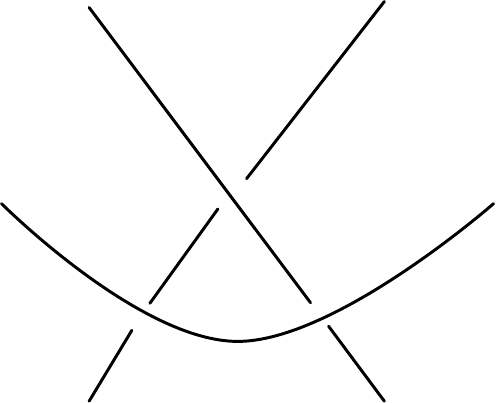} \end{array}  \]
\caption{Reidemeister III} 
\label{fig:r3move}
\end{subfigure} 
\caption{Reidemeister moves} 
\label{fig:rmoves}
\end{figure} 
\begin{figure}[htb]
\begin{subfigure}{0.48\linewidth}
\[ \begin{array}{c} \scalebox{0.5}{\input{diagrams/r1movelhs.pdf_tex}}
\end{array} \leftrightarrow 
\begin{array}{c} \scalebox{0.5}{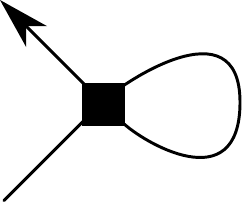} \end{array}  \]
\caption{Pseudo I} 
\label{fig:p1move}
\end{subfigure} 
\begin{subfigure}{0.48\linewidth}
\[ \begin{array}{c} \scalebox{0.5}{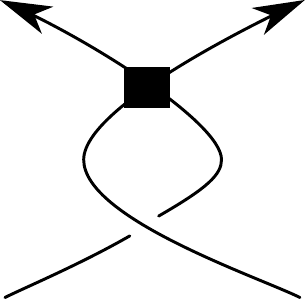}
\end{array} \leftrightarrow 
\begin{array}{c} \scalebox{0.5}{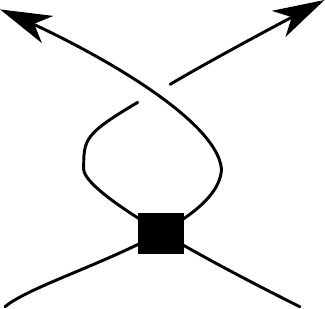} \end{array}  \]
\caption{Pseudo II} 
\label{fig:p2move}
\end{subfigure} 
\begin{subfigure}{0.48\linewidth}
\[ \begin{array}{c} \scalebox{0.5}{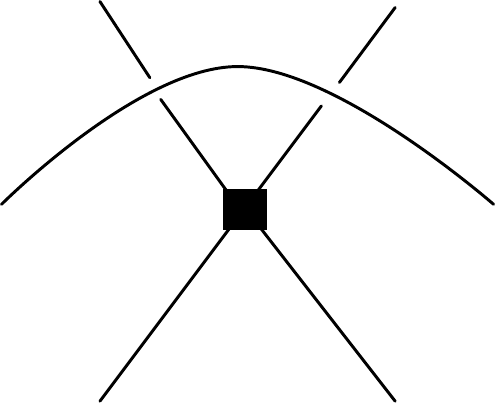}
\end{array} \leftrightarrow 
\begin{array}{c} \scalebox{0.5}{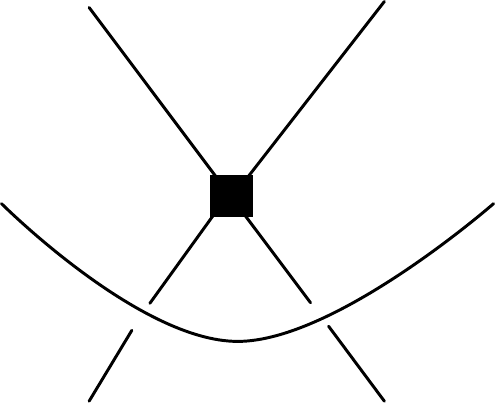} \end{array}  \]
\caption{ Pseudo III} 
\label{fig:p3move}
\end{subfigure} 
\caption{Pseudo  moves} 
\label{fig:pmoves}
\end{figure}

\section{The pseudo bracket polynomial}

Let $D$ be an oriented pseudo link. The pseudo bracket polynomial is defined by a skein relation, building on the definition of the Kauffman bracket polynomial \cite{kauffmanmaa}.
We expand a positive crossing as:
\begin{equation}\label{skeinpos}
\Bigg \langle  \begin{array}{c} \scalebox{0.3}{
\begingroup%
  \makeatletter%
  \providecommand\color[2][]{%
    \errmessage{(Inkscape) Color is used for the text in Inkscape, but the package 'color.sty' is not loaded}%
    \renewcommand\color[2][]{}%
  }%
  \providecommand\transparent[1]{%
    \errmessage{(Inkscape) Transparency is used (non-zero) for the text in Inkscape, but the package 'transparent.sty' is not loaded}%
    \renewcommand\transparent[1]{}%
  }%
  \providecommand\rotatebox[2]{#2}%
  \ifx\svgwidth\undefined%
    \setlength{\unitlength}{58.88773498bp}%
    \ifx\svgscale\undefined%
      \relax%
    \else%
      \setlength{\unitlength}{\unitlength * \real{\svgscale}}%
    \fi%
  \else%
    \setlength{\unitlength}{\svgwidth}%
  \fi%
  \global\let\svgwidth\undefined%
  \global\let\svgscale\undefined%
  \makeatother%
  \begin{picture}(1,0.98903821)%
    \put(0,0){\includegraphics[width=\unitlength]{positivecrossing.pdf}}%
  \end{picture}%
\endgroup%
} \end{array} 
\Bigg \rangle = A \Bigg \langle  \begin{array}{c} \scalebox{0.3}{
\begingroup%
  \makeatletter%
  \providecommand\color[2][]{%
    \errmessage{(Inkscape) Color is used for the text in Inkscape, but the package 'color.sty' is not loaded}%
    \renewcommand\color[2][]{}%
  }%
  \providecommand\transparent[1]{%
    \errmessage{(Inkscape) Transparency is used (non-zero) for the text in Inkscape, but the package 'transparent.sty' is not loaded}%
    \renewcommand\transparent[1]{}%
  }%
  \providecommand\rotatebox[2]{#2}%
  \ifx\svgwidth\undefined%
    \setlength{\unitlength}{58.93568977bp}%
    \ifx\svgscale\undefined%
      \relax%
    \else%
      \setlength{\unitlength}{\unitlength * \real{\svgscale}}%
    \fi%
  \else%
    \setlength{\unitlength}{\svgwidth}%
  \fi%
  \global\let\svgwidth\undefined%
  \global\let\svgscale\undefined%
  \makeatother%
  \begin{picture}(1,0.98874533)%
    \put(0,0){\includegraphics[width=\unitlength,page=1]{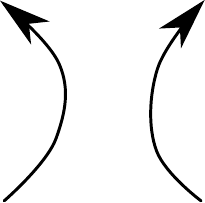}}%
  \end{picture}%
\endgroup%
} \end{array} 
\Bigg \rangle + A^{-1} \Bigg  \langle   \begin{array}{c} \scalebox{0.3}{
\begingroup%
  \makeatletter%
  \providecommand\color[2][]{%
    \errmessage{(Inkscape) Color is used for the text in Inkscape, but the package 'color.sty' is not loaded}%
    \renewcommand\color[2][]{}%
  }%
  \providecommand\transparent[1]{%
    \errmessage{(Inkscape) Transparency is used (non-zero) for the text in Inkscape, but the package 'transparent.sty' is not loaded}%
    \renewcommand\transparent[1]{}%
  }%
  \providecommand\rotatebox[2]{#2}%
  \ifx\svgwidth\undefined%
    \setlength{\unitlength}{57.54968285bp}%
    \ifx\svgscale\undefined%
      \relax%
    \else%
      \setlength{\unitlength}{\unitlength * \real{\svgscale}}%
    \fi%
  \else%
    \setlength{\unitlength}{\svgwidth}%
  \fi%
  \global\let\svgwidth\undefined%
  \global\let\svgscale\undefined%
  \makeatother%
  \begin{picture}(1,1.00890175)%
    \put(0,0){\includegraphics[width=\unitlength,page=1]{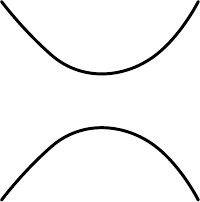}}%
  \end{picture}%
\endgroup%
} \end{array} 
\Bigg \rangle .
\end{equation}
A negative crossing is expanded as:
\begin{equation}\label{skeinneg}
\Bigg \langle \begin{array}{c} \scalebox{0.3}{
\begingroup%
  \makeatletter%
  \providecommand\color[2][]{%
    \errmessage{(Inkscape) Color is used for the text in Inkscape, but the package 'color.sty' is not loaded}%
    \renewcommand\color[2][]{}%
  }%
  \providecommand\transparent[1]{%
    \errmessage{(Inkscape) Transparency is used (non-zero) for the text in Inkscape, but the package 'transparent.sty' is not loaded}%
    \renewcommand\transparent[1]{}%
  }%
  \providecommand\rotatebox[2]{#2}%
  \ifx\svgwidth\undefined%
    \setlength{\unitlength}{58.88773498bp}%
    \ifx\svgscale\undefined%
      \relax%
    \else%
      \setlength{\unitlength}{\unitlength * \real{\svgscale}}%
    \fi%
  \else%
    \setlength{\unitlength}{\svgwidth}%
  \fi%
  \global\let\svgwidth\undefined%
  \global\let\svgscale\undefined%
  \makeatother%
  \begin{picture}(1,0.98903821)%
    \put(0,0){\includegraphics[width=\unitlength]{negativecrossing.pdf}}%
  \end{picture}%
\endgroup%
}  \end{array}
\Bigg \rangle = A \Bigg \langle  \begin{array}{c} \scalebox{0.3}{} \end{array}
\Bigg \rangle + A^{-1} \Bigg  \langle  \begin{array}{c} \scalebox{0.3}{} \end{array}
\Bigg \rangle .
\end{equation} 
Pseudo crossings are expanded as
\begin{equation} \label{skeinpseudo}
\Bigg \langle  \begin{array}{c}\scalebox{0.3}{
\begingroup%
  \makeatletter%
  \providecommand\color[2][]{%
    \errmessage{(Inkscape) Color is used for the text in Inkscape, but the package 'color.sty' is not loaded}%
    \renewcommand\color[2][]{}%
  }%
  \providecommand\transparent[1]{%
    \errmessage{(Inkscape) Transparency is used (non-zero) for the text in Inkscape, but the package 'transparent.sty' is not loaded}%
    \renewcommand\transparent[1]{}%
  }%
  \providecommand\rotatebox[2]{#2}%
  \ifx\svgwidth\undefined%
    \setlength{\unitlength}{58.88773498bp}%
    \ifx\svgscale\undefined%
      \relax%
    \else%
      \setlength{\unitlength}{\unitlength * \real{\svgscale}}%
    \fi%
  \else%
    \setlength{\unitlength}{\svgwidth}%
  \fi%
  \global\let\svgwidth\undefined%
  \global\let\svgscale\undefined%
  \makeatother%
  \begin{picture}(1,0.98903821)%
    \put(0,0){\includegraphics[width=\unitlength]{psuedocrossing.pdf}}%
  \end{picture}%
\endgroup%
} \end{array}
\Bigg \rangle = V \Bigg \langle  \begin{array}{c}\scalebox{0.3}{} \end{array}
\Bigg \rangle + H \Bigg  \langle  \begin{array}{c} \scalebox{0.3}{} \end{array} 
\Bigg \rangle 
\end{equation}
where $H = 1 - V d$. 

  Let $U$ denote the unknot and let $d = -A^2 -A^{-2}$. Expanding all crossings in a pseudo diagram results in a collection of simple closed curves. We evaluate a bracket containing an unlinked, simple closed curve using the simplifications:
\begin{equation} \label{reducebracket} 
\langle U \rangle  = 1 \text{ and }
 \langle U \cup K \rangle = d \langle K \rangle  .
\end{equation}

The set of classical crossings in a link $K$ is denoted as $ \mathit{C} (K)$. The writhe of $K$ is defined as 
\begin{equation} \label{writhe}
\mathit{w} (K) = \sum_{c \in \mathit{C} (K)} \mathit{sgn}(c).
\end{equation} 
Then, the  normalized pseudo bracket of a pseudo link $K$ is
\begin{equation}
P_K (A,V) = (-A^{-3})^{\mathit{w}(K)} \langle K \rangle .
\end{equation}

\begin{theorem}For all pseudo links $K$, the pseudo bracket is invariant under Reidemeister moves II and III and the pseudo moves. \end{theorem}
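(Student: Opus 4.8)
The plan is to prove the apparently stronger statement that the unnormalized bracket $\langle K\rangle$ is unchanged by Reidemeister moves II and III and by the three pseudo moves, and then to note that all five of these moves fix the writhe $w(K)$ of \eqref{writhe}: Reidemeister II creates or removes two classical crossings of opposite sign, Reidemeister III permutes three classical crossings without changing any sign, and the pseudo moves only create, destroy, or rearrange pseudocrossings and so affect neither the number nor the signs of the classical crossings. Hence the prefactor $(-A^{-3})^{w(K)}$ in the definition of $P_K$ agrees on the two sides of every move, and invariance of $\langle\,\cdot\,\rangle$ yields invariance of $P_K$.

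For Reidemeister II and III I would reproduce Kauffman's original argument, which applies verbatim: for classical crossings the skein relations \eqref{skeinpos} and \eqref{skeinneg}, the loop value $d=-A^{2}-A^{-2}$, and the reduction \eqref{reducebracket} are exactly those of the ordinary Kauffman bracket. Expanding both crossings of a Reidemeister II tangle produces four states; one is the trivial two-strand tangle with coefficient $1$, and the other three together contribute $A^{2}+A^{-2}+d$ times the other planar two-strand tangle, a quantity that vanishes by the choice of $d$. For Reidemeister III one expands a single one of the three crossings; after a suitable choice of which crossing, the equality of the two sides of the move reduces to an equality of diagrams that holds by planar isotopy together with one instance of the Reidemeister II invariance just established.

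The genuinely new input is Pseudo I, which amounts to the scalar identity $Vd+H=1$. Expanding the pseudocrossing of a pseudo-kink by \eqref{skeinpseudo} and applying \eqref{reducebracket} multiplies the bracket of the kinkless strand by $Vd+H$, and this equals $1$ precisely because $H=1-Vd$ --- which is how $H$ was chosen in the first place. Pseudo II and Pseudo III are then handled by the same expand-one-crossing mechanism used for Reidemeister III, now permitting the expanded crossing to be a pseudocrossing resolved via \eqref{skeinpseudo} as well as a classical crossing resolved via \eqref{skeinpos} or \eqref{skeinneg}. In each local configuration one checks that one of the two resolutions makes the two sides of the move planar isotopic, while the other reduces it to an already-verified instance of Reidemeister II invariance or of Pseudo I invariance; no relation beyond $d=-A^{2}-A^{-2}$ and $H=1-Vd$ enters.

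The main obstacle is organizing the case analysis rather than any single calculation. Reidemeister III and Pseudo III each occur in several configurations, according to which of the three crossings are over, under, or pseudo and --- once an orientation on $K$ has been fixed --- according to which smoothing of a given crossing is weighted by which member of $\{A,A^{-1}\}$ or $\{V,H\}$; for each configuration one must choose the crossing to expand so that the two resolutions behave as described and then verify this by inspecting the local pictures. Keeping the oriented-versus-disoriented smoothing conventions consistent across all of these reductions is the delicate point; once it is pinned down, each case is a short diagrammatic check.
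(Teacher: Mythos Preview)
Your approach is correct and matches the paper's: expand the pseudo crossing via \eqref{skeinpseudo} and reduce each pseudo move to planar isotopy together with the already-known classical Reidemeister~II/III invariance, with Pseudo~I fixing the relation $H=1-Vd$. One small point of calibration: the theorem as stated concerns only the unnormalized bracket $\langle K\rangle$ under Reidemeister~II, III and the pseudo moves; your writhe discussion is the content of the subsequent corollary about $P_K$, so what you call the ``apparently stronger statement'' is in fact exactly the theorem.
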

\begin{proof} 
Invariance under the Reidemeister moves is immediate. The skein relation on a classical link diagram gives the Kauffman bracket polynomial \cite{kauffmanmaa}. For a classical link $K$, 
$P_K (A,V) = f_K (A)$.  

We show that the bracket is invariant under the Pseudo moves. We begin with the Pseudo III move.
\begin{align*}
\Bigg \langle \begin{array}{c} \scalebox{0.3}{ 
\begingroup%
  \makeatletter%
  \providecommand\color[2][]{%
    \errmessage{(Inkscape) Color is used for the text in Inkscape, but the package 'color.sty' is not loaded}%
    \renewcommand\color[2][]{}%
  }%
  \providecommand\transparent[1]{%
    \errmessage{(Inkscape) Transparency is used (non-zero) for the text in Inkscape, but the package 'transparent.sty' is not loaded}%
    \renewcommand\transparent[1]{}%
  }%
  \providecommand\rotatebox[2]{#2}%
  \ifx\svgwidth\undefined%
    \setlength{\unitlength}{80.94912662bp}%
    \ifx\svgscale\undefined%
      \relax%
    \else%
      \setlength{\unitlength}{\unitlength * \real{\svgscale}}%
    \fi%
  \else%
    \setlength{\unitlength}{\svgwidth}%
  \fi%
  \global\let\svgwidth\undefined%
  \global\let\svgscale\undefined%
  \makeatother%
  \begin{picture}(1,1.08723909)%
    \put(0,0){\includegraphics[width=\unitlength,page=1]{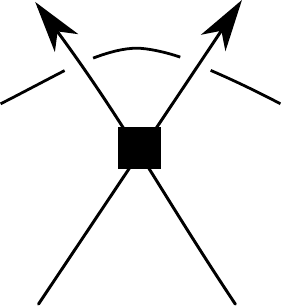}}%
  \end{picture}%
\endgroup%
}
\end{array} \Bigg \rangle &=
V \Bigg \langle \begin{array}{c} \scalebox{0.3}{
\begingroup%
  \makeatletter%
  \providecommand\color[2][]{%
    \errmessage{(Inkscape) Color is used for the text in Inkscape, but the package 'color.sty' is not loaded}%
    \renewcommand\color[2][]{}%
  }%
  \providecommand\transparent[1]{%
    \errmessage{(Inkscape) Transparency is used (non-zero) for the text in Inkscape, but the package 'transparent.sty' is not loaded}%
    \renewcommand\transparent[1]{}%
  }%
  \providecommand\rotatebox[2]{#2}%
  \ifx\svgwidth\undefined%
    \setlength{\unitlength}{80.90751129bp}%
    \ifx\svgscale\undefined%
      \relax%
    \else%
      \setlength{\unitlength}{\unitlength * \real{\svgscale}}%
    \fi%
  \else%
    \setlength{\unitlength}{\svgwidth}%
  \fi%
  \global\let\svgwidth\undefined%
  \global\let\svgscale\undefined%
  \makeatother%
  \begin{picture}(1,1.0772218)%
    \put(0,0){\includegraphics[width=\unitlength,page=1]{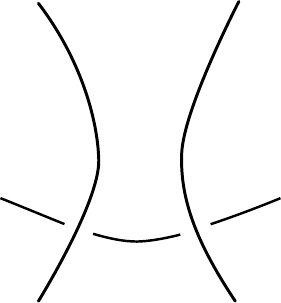}}%
  \end{picture}%
\endgroup%
}
\end{array} \Bigg \rangle
+ H \Bigg \langle \begin{array}{c} \scalebox{0.3}{
\begingroup%
  \makeatletter%
  \providecommand\color[2][]{%
    \errmessage{(Inkscape) Color is used for the text in Inkscape, but the package 'color.sty' is not loaded}%
    \renewcommand\color[2][]{}%
  }%
  \providecommand\transparent[1]{%
    \errmessage{(Inkscape) Transparency is used (non-zero) for the text in Inkscape, but the package 'transparent.sty' is not loaded}%
    \renewcommand\transparent[1]{}%
  }%
  \providecommand\rotatebox[2]{#2}%
  \ifx\svgwidth\undefined%
    \setlength{\unitlength}{80.90751129bp}%
    \ifx\svgscale\undefined%
      \relax%
    \else%
      \setlength{\unitlength}{\unitlength * \real{\svgscale}}%
    \fi%
  \else%
    \setlength{\unitlength}{\svgwidth}%
  \fi%
  \global\let\svgwidth\undefined%
  \global\let\svgscale\undefined%
  \makeatother%
  \begin{picture}(1,1.07723234)%
    \put(0,0){\includegraphics[width=\unitlength,page=1]{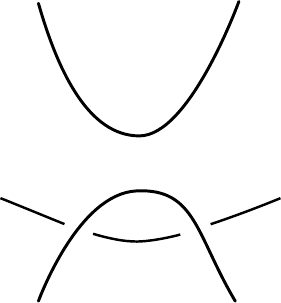}}%
  \end{picture}%
\endgroup%
}
\end{array} \Bigg \rangle  \\
&= V \Bigg \langle \begin{array}{c} \scalebox{0.3}{}
\end{array} \Bigg \rangle
+ H \Bigg \langle \begin{array}{c} \scalebox{0.3}{
\begingroup%
  \makeatletter%
  \providecommand\color[2][]{%
    \errmessage{(Inkscape) Color is used for the text in Inkscape, but the package 'color.sty' is not loaded}%
    \renewcommand\color[2][]{}%
  }%
  \providecommand\transparent[1]{%
    \errmessage{(Inkscape) Transparency is used (non-zero) for the text in Inkscape, but the package 'transparent.sty' is not loaded}%
    \renewcommand\transparent[1]{}%
  }%
  \providecommand\rotatebox[2]{#2}%
  \ifx\svgwidth\undefined%
    \setlength{\unitlength}{80.90751129bp}%
    \ifx\svgscale\undefined%
      \relax%
    \else%
      \setlength{\unitlength}{\unitlength * \real{\svgscale}}%
    \fi%
  \else%
    \setlength{\unitlength}{\svgwidth}%
  \fi%
  \global\let\svgwidth\undefined%
  \global\let\svgscale\undefined%
  \makeatother%
  \begin{picture}(1,1.07723237)%
    \put(0,0){\includegraphics[width=\unitlength,page=1]{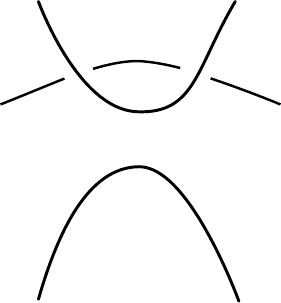}}%
  \end{picture}%
\endgroup%
}
\end{array} \Bigg \rangle  \\
&= \Bigg \langle \begin{array}{c} \scalebox{0.3}{
\begingroup%
  \makeatletter%
  \providecommand\color[2][]{%
    \errmessage{(Inkscape) Color is used for the text in Inkscape, but the package 'color.sty' is not loaded}%
    \renewcommand\color[2][]{}%
  }%
  \providecommand\transparent[1]{%
    \errmessage{(Inkscape) Transparency is used (non-zero) for the text in Inkscape, but the package 'transparent.sty' is not loaded}%
    \renewcommand\transparent[1]{}%
  }%
  \providecommand\rotatebox[2]{#2}%
  \ifx\svgwidth\undefined%
    \setlength{\unitlength}{80.90751129bp}%
    \ifx\svgscale\undefined%
      \relax%
    \else%
      \setlength{\unitlength}{\unitlength * \real{\svgscale}}%
    \fi%
  \else%
    \setlength{\unitlength}{\svgwidth}%
  \fi%
  \global\let\svgwidth\undefined%
  \global\let\svgscale\undefined%
  \makeatother%
  \begin{picture}(1,1.08779832)%
    \put(0,0){\includegraphics[width=\unitlength,page=1]{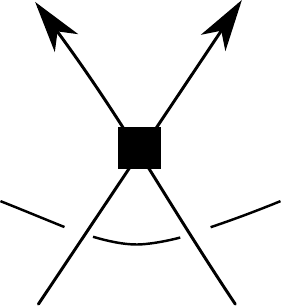}}%
  \end{picture}%
\endgroup%
}
\end{array} \Bigg \rangle .
\end{align*}
Next, the Pseudo II move. 
\begin{align*}
\Bigg \langle \begin{array}{c} \scalebox{0.3}{
\begingroup%
  \makeatletter%
  \providecommand\color[2][]{%
    \errmessage{(Inkscape) Color is used for the text in Inkscape, but the package 'color.sty' is not loaded}%
    \renewcommand\color[2][]{}%
  }%
  \providecommand\transparent[1]{%
    \errmessage{(Inkscape) Transparency is used (non-zero) for the text in Inkscape, but the package 'transparent.sty' is not loaded}%
    \renewcommand\transparent[1]{}%
  }%
  \providecommand\rotatebox[2]{#2}%
  \ifx\svgwidth\undefined%
    \setlength{\unitlength}{86.48308778bp}%
    \ifx\svgscale\undefined%
      \relax%
    \else%
      \setlength{\unitlength}{\unitlength * \real{\svgscale}}%
    \fi%
  \else%
    \setlength{\unitlength}{\svgwidth}%
  \fi%
  \global\let\svgwidth\undefined%
  \global\let\svgscale\undefined%
  \makeatother%
  \begin{picture}(1,1.66686861)%
    \put(0,0){\includegraphics[width=\unitlength,page=1]{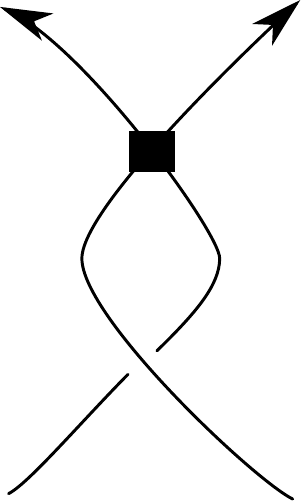}}%
  \end{picture}%
\endgroup%
}
\end{array} \Bigg \rangle &= 
V \Bigg \langle \begin{array}{c} \scalebox{0.4}{
\begingroup%
  \makeatletter%
  \providecommand\color[2][]{%
    \errmessage{(Inkscape) Color is used for the text in Inkscape, but the package 'color.sty' is not loaded}%
    \renewcommand\color[2][]{}%
  }%
  \providecommand\transparent[1]{%
    \errmessage{(Inkscape) Transparency is used (non-zero) for the text in Inkscape, but the package 'transparent.sty' is not loaded}%
    \renewcommand\transparent[1]{}%
  }%
  \providecommand\rotatebox[2]{#2}%
  \ifx\svgwidth\undefined%
    \setlength{\unitlength}{84.77370387bp}%
    \ifx\svgscale\undefined%
      \relax%
    \else%
      \setlength{\unitlength}{\unitlength * \real{\svgscale}}%
    \fi%
  \else%
    \setlength{\unitlength}{\svgwidth}%
  \fi%
  \global\let\svgwidth\undefined%
  \global\let\svgscale\undefined%
  \makeatother%
  \begin{picture}(1,1.69324202)%
    \put(0,0){\includegraphics[width=\unitlength,page=1]{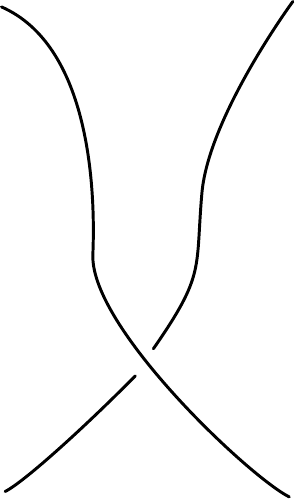}}%
  \end{picture}%
\endgroup%
}
\end{array} \Bigg \rangle  
+ H \Bigg \langle \begin{array}{c} \scalebox{0.4}{
\begingroup%
  \makeatletter%
  \providecommand\color[2][]{%
    \errmessage{(Inkscape) Color is used for the text in Inkscape, but the package 'color.sty' is not loaded}%
    \renewcommand\color[2][]{}%
  }%
  \providecommand\transparent[1]{%
    \errmessage{(Inkscape) Transparency is used (non-zero) for the text in Inkscape, but the package 'transparent.sty' is not loaded}%
    \renewcommand\transparent[1]{}%
  }%
  \providecommand\rotatebox[2]{#2}%
  \ifx\svgwidth\undefined%
    \setlength{\unitlength}{84.77191427bp}%
    \ifx\svgscale\undefined%
      \relax%
    \else%
      \setlength{\unitlength}{\unitlength * \real{\svgscale}}%
    \fi%
  \else%
    \setlength{\unitlength}{\svgwidth}%
  \fi%
  \global\let\svgwidth\undefined%
  \global\let\svgscale\undefined%
  \makeatother%
  \begin{picture}(1,1.69329556)%
    \put(0,0){\includegraphics[width=\unitlength,page=1]{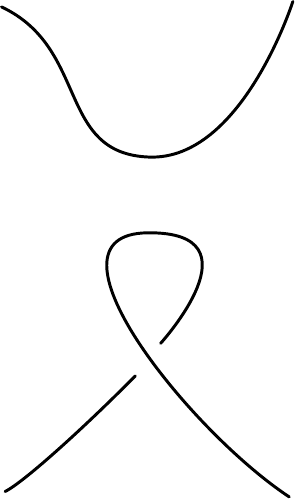}}%
  \end{picture}%
\endgroup%
}
\end{array} \Bigg \rangle \\
&= 
V \Bigg \langle \begin{array}{c} \scalebox{0.4}{
\begingroup%
  \makeatletter%
  \providecommand\color[2][]{%
    \errmessage{(Inkscape) Color is used for the text in Inkscape, but the package 'color.sty' is not loaded}%
    \renewcommand\color[2][]{}%
  }%
  \providecommand\transparent[1]{%
    \errmessage{(Inkscape) Transparency is used (non-zero) for the text in Inkscape, but the package 'transparent.sty' is not loaded}%
    \renewcommand\transparent[1]{}%
  }%
  \providecommand\rotatebox[2]{#2}%
  \ifx\svgwidth\undefined%
    \setlength{\unitlength}{84.77370719bp}%
    \ifx\svgscale\undefined%
      \relax%
    \else%
      \setlength{\unitlength}{\unitlength * \real{\svgscale}}%
    \fi%
  \else%
    \setlength{\unitlength}{\svgwidth}%
  \fi%
  \global\let\svgwidth\undefined%
  \global\let\svgscale\undefined%
  \makeatother%
  \begin{picture}(1,1.69324195)%
    \put(0,0){\includegraphics[width=\unitlength,page=1]{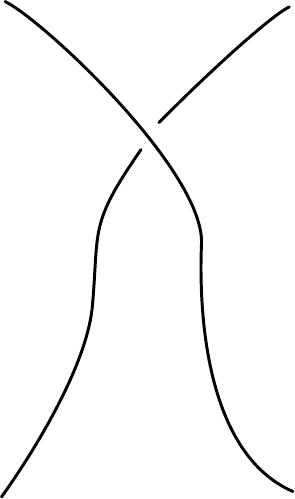}}%
  \end{picture}%
\endgroup%
}
\end{array} \Bigg \rangle  
+ H \Bigg \langle \begin{array}{c} \scalebox{0.4}{
\begingroup%
  \makeatletter%
  \providecommand\color[2][]{%
    \errmessage{(Inkscape) Color is used for the text in Inkscape, but the package 'color.sty' is not loaded}%
    \renewcommand\color[2][]{}%
  }%
  \providecommand\transparent[1]{%
    \errmessage{(Inkscape) Transparency is used (non-zero) for the text in Inkscape, but the package 'transparent.sty' is not loaded}%
    \renewcommand\transparent[1]{}%
  }%
  \providecommand\rotatebox[2]{#2}%
  \ifx\svgwidth\undefined%
    \setlength{\unitlength}{84.76974541bp}%
    \ifx\svgscale\undefined%
      \relax%
    \else%
      \setlength{\unitlength}{\unitlength * \real{\svgscale}}%
    \fi%
  \else%
    \setlength{\unitlength}{\svgwidth}%
  \fi%
  \global\let\svgwidth\undefined%
  \global\let\svgscale\undefined%
  \makeatother%
  \begin{picture}(1,1.70558406)%
    \put(0,0){\includegraphics[width=\unitlength,page=1]{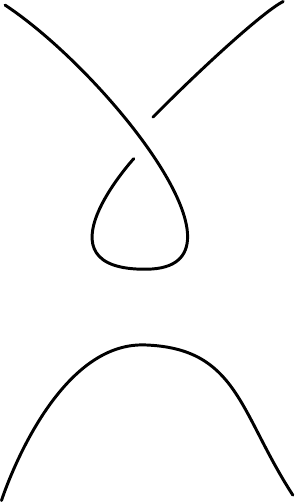}}%
  \end{picture}%
\endgroup%
}
\end{array} \Bigg \rangle  \\
&= \Bigg \langle \begin{array}{c} \scalebox{0.3}{
\begingroup%
  \makeatletter%
  \providecommand\color[2][]{%
    \errmessage{(Inkscape) Color is used for the text in Inkscape, but the package 'color.sty' is not loaded}%
    \renewcommand\color[2][]{}%
  }%
  \providecommand\transparent[1]{%
    \errmessage{(Inkscape) Transparency is used (non-zero) for the text in Inkscape, but the package 'transparent.sty' is not loaded}%
    \renewcommand\transparent[1]{}%
  }%
  \providecommand\rotatebox[2]{#2}%
  \ifx\svgwidth\undefined%
    \setlength{\unitlength}{86.48308776bp}%
    \ifx\svgscale\undefined%
      \relax%
    \else%
      \setlength{\unitlength}{\unitlength * \real{\svgscale}}%
    \fi%
  \else%
    \setlength{\unitlength}{\svgwidth}%
  \fi%
  \global\let\svgwidth\undefined%
  \global\let\svgscale\undefined%
  \makeatother%
  \begin{picture}(1,1.66686861)%
    \put(0,0){\includegraphics[width=\unitlength,page=1]{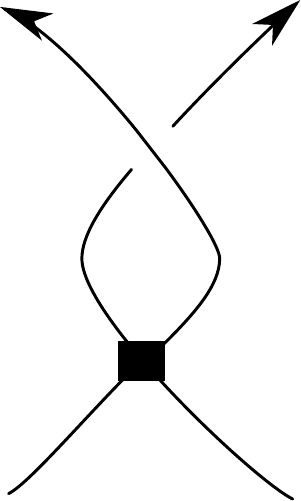}}%
  \end{picture}%
\endgroup%
}
\end{array} \Bigg \rangle .
\end{align*}

In the Pseudo I move,
\begin{align*}
\Bigg \langle \begin{array}{c} \scalebox{0.3}{
\begingroup%
  \makeatletter%
  \providecommand\color[2][]{%
    \errmessage{(Inkscape) Color is used for the text in Inkscape, but the package 'color.sty' is not loaded}%
    \renewcommand\color[2][]{}%
  }%
  \providecommand\transparent[1]{%
    \errmessage{(Inkscape) Transparency is used (non-zero) for the text in Inkscape, but the package 'transparent.sty' is not loaded}%
    \renewcommand\transparent[1]{}%
  }%
  \providecommand\rotatebox[2]{#2}%
  \ifx\svgwidth\undefined%
    \setlength{\unitlength}{69.55111057bp}%
    \ifx\svgscale\undefined%
      \relax%
    \else%
      \setlength{\unitlength}{\unitlength * \real{\svgscale}}%
    \fi%
  \else%
    \setlength{\unitlength}{\svgwidth}%
  \fi%
  \global\let\svgwidth\undefined%
  \global\let\svgscale\undefined%
  \makeatother%
  \begin{picture}(1,0.83703659)%
    \put(0,0){\includegraphics[width=\unitlength,page=1]{pr1moverhs.pdf}}%
  \end{picture}%
\endgroup%
}
\end{array} \Bigg \rangle &=
V \Bigg \langle \begin{array}{c} \scalebox{0.3}{
\begingroup%
  \makeatletter%
  \providecommand\color[2][]{%
    \errmessage{(Inkscape) Color is used for the text in Inkscape, but the package 'color.sty' is not loaded}%
    \renewcommand\color[2][]{}%
  }%
  \providecommand\transparent[1]{%
    \errmessage{(Inkscape) Transparency is used (non-zero) for the text in Inkscape, but the package 'transparent.sty' is not loaded}%
    \renewcommand\transparent[1]{}%
  }%
  \providecommand\rotatebox[2]{#2}%
  \ifx\svgwidth\undefined%
    \setlength{\unitlength}{68.88629532bp}%
    \ifx\svgscale\undefined%
      \relax%
    \else%
      \setlength{\unitlength}{\unitlength * \real{\svgscale}}%
    \fi%
  \else%
    \setlength{\unitlength}{\svgwidth}%
  \fi%
  \global\let\svgwidth\undefined%
  \global\let\svgscale\undefined%
  \makeatother%
  \begin{picture}(1,0.83545977)%
    \put(0,0){\includegraphics[width=\unitlength,page=1]{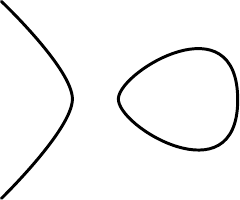}}%
  \end{picture}%
\endgroup%
}
\end{array} \Bigg \rangle + H \Bigg \langle \begin{array}{c} \scalebox{0.3}{
\begingroup%
  \makeatletter%
  \providecommand\color[2][]{%
    \errmessage{(Inkscape) Color is used for the text in Inkscape, but the package 'color.sty' is not loaded}%
    \renewcommand\color[2][]{}%
  }%
  \providecommand\transparent[1]{%
    \errmessage{(Inkscape) Transparency is used (non-zero) for the text in Inkscape, but the package 'transparent.sty' is not loaded}%
    \renewcommand\transparent[1]{}%
  }%
  \providecommand\rotatebox[2]{#2}%
  \ifx\svgwidth\undefined%
    \setlength{\unitlength}{69.55111028bp}%
    \ifx\svgscale\undefined%
      \relax%
    \else%
      \setlength{\unitlength}{\unitlength * \real{\svgscale}}%
    \fi%
  \else%
    \setlength{\unitlength}{\svgwidth}%
  \fi%
  \global\let\svgwidth\undefined%
  \global\let\svgscale\undefined%
  \makeatother%
  \begin{picture}(1,0.8370366)%
    \put(0,0){\includegraphics[width=\unitlength,page=1]{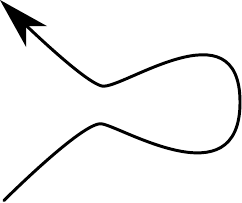}}%
  \end{picture}%
\endgroup%
} \end{array} \Bigg \rangle \\
&= \left( V d + H \right) \Bigg \langle 
\begin{array}{c} \scalebox{0.3}{
\begingroup%
  \makeatletter%
  \providecommand\color[2][]{%
    \errmessage{(Inkscape) Color is used for the text in Inkscape, but the package 'color.sty' is not loaded}%
    \renewcommand\color[2][]{}%
  }%
  \providecommand\transparent[1]{%
    \errmessage{(Inkscape) Transparency is used (non-zero) for the text in Inkscape, but the package 'transparent.sty' is not loaded}%
    \renewcommand\transparent[1]{}%
  }%
  \providecommand\rotatebox[2]{#2}%
  \ifx\svgwidth\undefined%
    \setlength{\unitlength}{52.98709431bp}%
    \ifx\svgscale\undefined%
      \relax%
    \else%
      \setlength{\unitlength}{\unitlength * \real{\svgscale}}%
    \fi%
  \else%
    \setlength{\unitlength}{\svgwidth}%
  \fi%
  \global\let\svgwidth\undefined%
  \global\let\svgscale\undefined%
  \makeatother%
  \begin{picture}(1,1.13011415)%
    \put(0,0){\includegraphics[width=\unitlength,page=1]{r1movelhs.pdf}}%
  \end{picture}%
\endgroup%
} \end{array} \Bigg \rangle .
\end{align*}
Then $ 1= Vd + H$ or $H = 1- V d$. 
\end{proof}

\begin{corollary} For all pseudo links $K$, $P_K (A,V) $ is invariant under the Pseudo moves and the Reidemeister moves. \end{corollary}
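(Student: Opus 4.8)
The plan is to deduce the Corollary from the Theorem by the usual writhe-normalization argument for the Kauffman bracket. The Theorem already gives invariance of the unnormalized pseudo bracket $\langle K\rangle$ under Reidemeister moves II and III and under all three Pseudo moves, so only two things remain: to see that the prefactor $(-A^{-3})^{\mathit{w}(K)}$ does not disturb these invariances, and to handle Reidemeister move I, which the Theorem deliberately omits.

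First I would check that the writhe $\mathit{w}(K)$ is itself unchanged by every move under which $\langle K\rangle$ is invariant. A Reidemeister II move deletes or inserts a pair of classical crossings of opposite sign, so it leaves $\sum_{c\in\mathit{C}(K)}\mathit{sgn}(c)$ fixed; a Reidemeister III move merely permutes three classical crossings without changing any of their signs; and each Pseudo move alters only pseudo crossings (in the Pseudo III case, sliding a classical strand across a pseudo crossing) and hence does not change the multiset of signs of classical crossings at all. Consequently $(-A^{-3})^{\mathit{w}(K)}$ is a scalar that is constant along any such move, and $P_K(A,V)=(-A^{-3})^{\mathit{w}(K)}\langle K\rangle$ inherits invariance under R2, R3 and the Pseudo moves directly from the Theorem.

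Next I would treat Reidemeister I. Expanding the kink with the skein relations \eqref{skeinpos} and \eqref{skeinneg} and using the loop value $d=-A^2-A^{-2}$ from \eqref{reducebracket} shows in the standard way that a positive curl multiplies $\langle K\rangle$ by $-A^{3}$ and a negative curl multiplies it by $-A^{-3}$; simultaneously $\mathit{w}(K)$ changes by $+1$ or $-1$, so the change in $(-A^{-3})^{\mathit{w}(K)}$ exactly compensates, and $P_K$ is invariant under R1 as well. Combining this with the previous paragraph gives invariance of $P_K$ under all Reidemeister and Pseudo moves, so $P_K$ is a well-defined invariant of pseudo links. All of this is routine given the Theorem; the only point needing a little care is confirming in the second paragraph that no Pseudo move (in particular the mixed Pseudo III move) creates, destroys, or re-signs a classical crossing, and I do not anticipate any real difficulty there.
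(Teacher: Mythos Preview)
Your argument is correct and is exactly the routine verification the paper leaves implicit: the Corollary is stated without proof, as an immediate consequence of the Theorem together with the standard writhe-normalization for the Kauffman bracket, and you have simply spelled out those details.

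One small wording point: your sentence ``each Pseudo move alters only pseudo crossings'' is not literally accurate for the Pseudo~II and Pseudo~III moves, which each involve classical crossings as well as a pseudo crossing. What you need---and what you correctly state in the same sentence---is that none of the Pseudo moves changes the multiset of signs of the classical crossings present; in Pseudo~II the single classical crossing persists with the same sign on both sides, and in Pseudo~III the two classical crossings persist with the same signs. With that clarification your check that $\mathit{w}(K)$ is unchanged by all the moves in the Theorem goes through, and the rest of your argument (the R1 curl computation cancelling the change in the prefactor) is standard and correct.
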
 

\begin{example}
The pseudo bracket is a applied to a trefoil with one pseudo crossing. This pseudo diagram is denoted as $PT$. 
\begin{align*}
\Bigg \langle \begin{array}{c} \scalebox{0.3}{
\begingroup%
  \makeatletter%
  \providecommand\color[2][]{%
    \errmessage{(Inkscape) Color is used for the text in Inkscape, but the package 'color.sty' is not loaded}%
    \renewcommand\color[2][]{}%
  }%
  \providecommand\transparent[1]{%
    \errmessage{(Inkscape) Transparency is used (non-zero) for the text in Inkscape, but the package 'transparent.sty' is not loaded}%
    \renewcommand\transparent[1]{}%
  }%
  \providecommand\rotatebox[2]{#2}%
  \ifx\svgwidth\undefined%
    \setlength{\unitlength}{89.80767717bp}%
    \ifx\svgscale\undefined%
      \relax%
    \else%
      \setlength{\unitlength}{\unitlength * \real{\svgscale}}%
    \fi%
  \else%
    \setlength{\unitlength}{\svgwidth}%
  \fi%
  \global\let\svgwidth\undefined%
  \global\let\svgscale\undefined%
  \makeatother%
  \begin{picture}(1,0.95581325)%
    \put(0,0){\includegraphics[width=\unitlength,page=1]{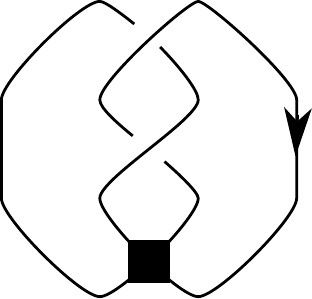}}%
  \end{picture}%
\endgroup%
}
\end{array} \Bigg \rangle
&= A^2 \Bigg \langle  \begin{array}{c} \scalebox{0.3}{
\begingroup%
  \makeatletter%
  \providecommand\color[2][]{%
    \errmessage{(Inkscape) Color is used for the text in Inkscape, but the package 'color.sty' is not loaded}%
    \renewcommand\color[2][]{}%
  }%
  \providecommand\transparent[1]{%
    \errmessage{(Inkscape) Transparency is used (non-zero) for the text in Inkscape, but the package 'transparent.sty' is not loaded}%
    \renewcommand\transparent[1]{}%
  }%
  \providecommand\rotatebox[2]{#2}%
  \ifx\svgwidth\undefined%
    \setlength{\unitlength}{89.80767717bp}%
    \ifx\svgscale\undefined%
      \relax%
    \else%
      \setlength{\unitlength}{\unitlength * \real{\svgscale}}%
    \fi%
  \else%
    \setlength{\unitlength}{\svgwidth}%
  \fi%
  \global\let\svgwidth\undefined%
  \global\let\svgscale\undefined%
  \makeatother%
  \begin{picture}(1,0.95802915)%
    \put(0,0){\includegraphics[width=\unitlength,page=1]{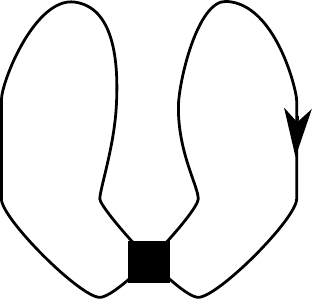}}%
  \end{picture}%
\endgroup%
} \end{array}
\Bigg \rangle + 2 \Bigg \langle \begin{array}{c}  \scalebox{0.3}{
\begingroup%
  \makeatletter%
  \providecommand\color[2][]{%
    \errmessage{(Inkscape) Color is used for the text in Inkscape, but the package 'color.sty' is not loaded}%
    \renewcommand\color[2][]{}%
  }%
  \providecommand\transparent[1]{%
    \errmessage{(Inkscape) Transparency is used (non-zero) for the text in Inkscape, but the package 'transparent.sty' is not loaded}%
    \renewcommand\transparent[1]{}%
  }%
  \providecommand\rotatebox[2]{#2}%
  \ifx\svgwidth\undefined%
    \setlength{\unitlength}{89.80767717bp}%
    \ifx\svgscale\undefined%
      \relax%
    \else%
      \setlength{\unitlength}{\unitlength * \real{\svgscale}}%
    \fi%
  \else%
    \setlength{\unitlength}{\svgwidth}%
  \fi%
  \global\let\svgwidth\undefined%
  \global\let\svgscale\undefined%
  \makeatother%
  \begin{picture}(1,0.95804287)%
    \put(0,0){\includegraphics[width=\unitlength,page=1]{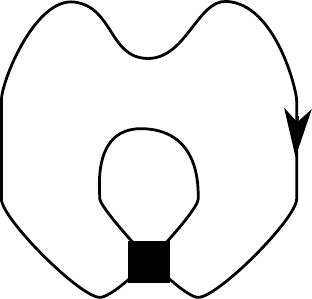}}%
  \end{picture}%
\endgroup%
} \end{array}
\Bigg \rangle + A^{-2} \Bigg \langle  \begin{array}{c} \scalebox{0.3}{
\begingroup%
  \makeatletter%
  \providecommand\color[2][]{%
    \errmessage{(Inkscape) Color is used for the text in Inkscape, but the package 'color.sty' is not loaded}%
    \renewcommand\color[2][]{}%
  }%
  \providecommand\transparent[1]{%
    \errmessage{(Inkscape) Transparency is used (non-zero) for the text in Inkscape, but the package 'transparent.sty' is not loaded}%
    \renewcommand\transparent[1]{}%
  }%
  \providecommand\rotatebox[2]{#2}%
  \ifx\svgwidth\undefined%
    \setlength{\unitlength}{89.80767717bp}%
    \ifx\svgscale\undefined%
      \relax%
    \else%
      \setlength{\unitlength}{\unitlength * \real{\svgscale}}%
    \fi%
  \else%
    \setlength{\unitlength}{\svgwidth}%
  \fi%
  \global\let\svgwidth\undefined%
  \global\let\svgscale\undefined%
  \makeatother%
  \begin{picture}(1,0.95804287)%
    \put(0,0){\includegraphics[width=\unitlength,page=1]{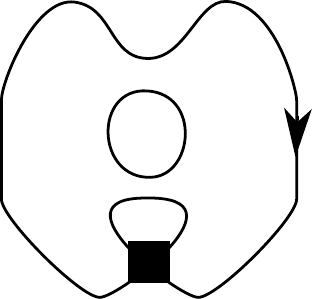}}%
  \end{picture}%
\endgroup%
} \end{array}
\Bigg \rangle \\
&= A^2 \left( V \Bigg \langle  \begin{array}{c} \scalebox{0.3}{
\begingroup%
  \makeatletter%
  \providecommand\color[2][]{%
    \errmessage{(Inkscape) Color is used for the text in Inkscape, but the package 'color.sty' is not loaded}%
    \renewcommand\color[2][]{}%
  }%
  \providecommand\transparent[1]{%
    \errmessage{(Inkscape) Transparency is used (non-zero) for the text in Inkscape, but the package 'transparent.sty' is not loaded}%
    \renewcommand\transparent[1]{}%
  }%
  \providecommand\rotatebox[2]{#2}%
  \ifx\svgwidth\undefined%
    \setlength{\unitlength}{57.23262301bp}%
    \ifx\svgscale\undefined%
      \relax%
    \else%
      \setlength{\unitlength}{\unitlength * \real{\svgscale}}%
    \fi%
  \else%
    \setlength{\unitlength}{\svgwidth}%
  \fi%
  \global\let\svgwidth\undefined%
  \global\let\svgscale\undefined%
  \makeatother%
  \begin{picture}(1,1.40076494)%
    \put(0,0){\includegraphics[width=\unitlength,page=1]{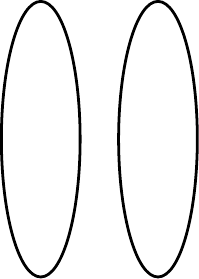}}%
  \end{picture}%
\endgroup%
} \end{array}
\Bigg \rangle + H \Bigg \langle \begin{array}{c} \scalebox{0.3}{
\begingroup%
  \makeatletter%
  \providecommand\color[2][]{%
    \errmessage{(Inkscape) Color is used for the text in Inkscape, but the package 'color.sty' is not loaded}%
    \renewcommand\color[2][]{}%
  }%
  \providecommand\transparent[1]{%
    \errmessage{(Inkscape) Transparency is used (non-zero) for the text in Inkscape, but the package 'transparent.sty' is not loaded}%
    \renewcommand\transparent[1]{}%
  }%
  \providecommand\rotatebox[2]{#2}%
  \ifx\svgwidth\undefined%
    \setlength{\unitlength}{114.18582891bp}%
    \ifx\svgscale\undefined%
      \relax%
    \else%
      \setlength{\unitlength}{\unitlength * \real{\svgscale}}%
    \fi%
  \else%
    \setlength{\unitlength}{\svgwidth}%
  \fi%
  \global\let\svgwidth\undefined%
  \global\let\svgscale\undefined%
  \makeatother%
  \begin{picture}(1,1.01516024)%
    \put(0,0){\includegraphics[width=\unitlength,page=1]{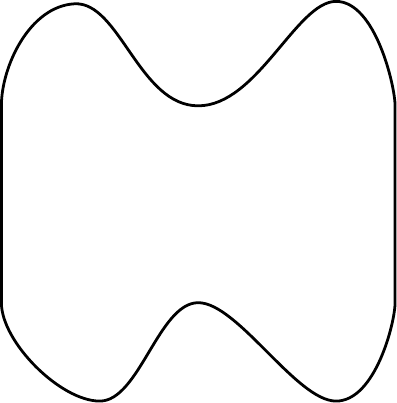}}%
  \end{picture}%
\endgroup%
}
\end{array} \Bigg \rangle \right)
+ 2 V \Bigg \langle
\begin{array}{c} \scalebox{0.3}{} \end{array} \Bigg \rangle 
 \\
&  + 2 H \Bigg \langle \begin{array}{c} \scalebox{0.3}{
\begingroup%
  \makeatletter%
  \providecommand\color[2][]{%
    \errmessage{(Inkscape) Color is used for the text in Inkscape, but the package 'color.sty' is not loaded}%
    \renewcommand\color[2][]{}%
  }%
  \providecommand\transparent[1]{%
    \errmessage{(Inkscape) Transparency is used (non-zero) for the text in Inkscape, but the package 'transparent.sty' is not loaded}%
    \renewcommand\transparent[1]{}%
  }%
  \providecommand\rotatebox[2]{#2}%
  \ifx\svgwidth\undefined%
    \setlength{\unitlength}{114.18582891bp}%
    \ifx\svgscale\undefined%
      \relax%
    \else%
      \setlength{\unitlength}{\unitlength * \real{\svgscale}}%
    \fi%
  \else%
    \setlength{\unitlength}{\svgwidth}%
  \fi%
  \global\let\svgwidth\undefined%
  \global\let\svgscale\undefined%
  \makeatother%
  \begin{picture}(1,1.01516024)%
    \put(0,0){\includegraphics[width=\unitlength,page=1]{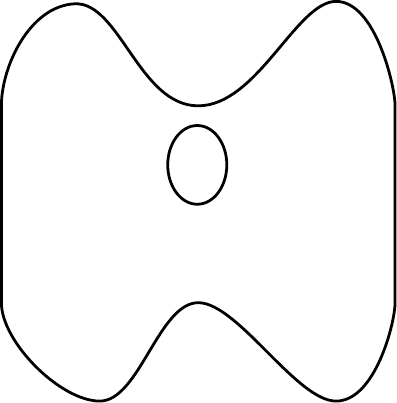}}%
  \end{picture}%
\endgroup%
} 
\end{array} \Bigg \rangle  
+A^{-2} \left( 
V \Bigg \langle  \begin{array}{c}  \scalebox{0.3}{} \end{array}
\Bigg \rangle + H \Bigg \langle \begin{array}{c}  \scalebox{0.3}{
\begingroup%
  \makeatletter%
  \providecommand\color[2][]{%
    \errmessage{(Inkscape) Color is used for the text in Inkscape, but the package 'color.sty' is not loaded}%
    \renewcommand\color[2][]{}%
  }%
  \providecommand\transparent[1]{%
    \errmessage{(Inkscape) Transparency is used (non-zero) for the text in Inkscape, but the package 'transparent.sty' is not loaded}%
    \renewcommand\transparent[1]{}%
  }%
  \providecommand\rotatebox[2]{#2}%
  \ifx\svgwidth\undefined%
    \setlength{\unitlength}{114.18582891bp}%
    \ifx\svgscale\undefined%
      \relax%
    \else%
      \setlength{\unitlength}{\unitlength * \real{\svgscale}}%
    \fi%
  \else%
    \setlength{\unitlength}{\svgwidth}%
  \fi%
  \global\let\svgwidth\undefined%
  \global\let\svgscale\undefined%
  \makeatother%
  \begin{picture}(1,1.01516024)%
    \put(0,0){\includegraphics[width=\unitlength,page=1]{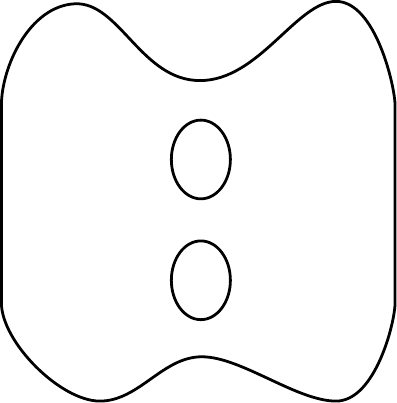}}%
  \end{picture}%
\endgroup%
} \end{array}
\Bigg \rangle  \right) \\
&=-A^{-8} V + A^{-6} -A^4 V 
\end{align*} 
Then, $P_{PT} (A,V) =A^{-12} + V A^{-14} - V A^{-2} $.
\end{example} 
\section{An obstruction}
Let $D_{+}$ be a classical knot diagram with a selected positive crossing.
The classical knot diagram $D_{-}$ is obtained from $D_{+}$ by changing the selected crossing to a negative crossing. 
The pseudo diagram $D_{\blacksquare} $ is obtained from $D_{+}$ by changing the selected crossing to a pseudo crossing. 
Suppose that the selected crossing is cosmetic and $D_{+} \sim D_{-}$. Then we obtain the following theorem. 

\begin{theorem} For all  knot diagrams $D_{+}$ with a cosmetic crossing $c$, 
$ \langle D_{ \blacksquare} \rangle $ divides $ \langle D_{+} \rangle $ and $ \langle D_{-}  \rangle $.  \end{theorem}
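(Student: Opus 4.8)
The plan is to expand $D_{+}$, $D_{-}$, and $D_{\blacksquare}$ at the distinguished crossing $c$ via the skein relations \eqref{skeinpos}, \eqref{skeinneg}, \eqref{skeinpseudo}, and then let the cosmetic hypothesis pin down the relation between the two smoothings of $c$. First I would fix notation: write $s_{1}$ and $s_{2}$ for the two classical link diagrams obtained by smoothing $D_{+}$ at $c$, labelled so that $A$ is the coefficient of $s_{1}$ in \eqref{skeinpos}; by the conventions of \eqref{skeinpseudo} (see Figure~\ref{fig:crossings}) the coefficient of $s_{1}$ in the pseudo expansion is then $V$. Put $x:=\langle s_{1}\rangle$ and $y:=\langle s_{2}\rangle$, which lie in $\mathbb{Z}[A^{\pm1}]$. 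Since $D_{-}$ is $D_{+}$ with $c$ switched, the crossing change interchanges the two smoothing coefficients, so the three expansions are
\[
\langle D_{+}\rangle = Ax+A^{-1}y,\qquad \langle D_{-}\rangle = A^{-1}x+Ay,\qquad \langle D_{\blacksquare}\rangle = Vx+Hy = Vx+(1-Vd)y ,
\]
all viewed inside $\mathbb{Z}[A^{\pm1}][V]$.

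Next I would use cosmeticity. Since $c$ is cosmetic, $D_{+}\sim D_{-}$, so by the Corollary $P_{D_{+}}=P_{D_{-}}$; and because the crossing change at $c$ shifts the writhe by $2$ we have $w(D_{+})=w(D_{-})+2$, so the equality of normalized brackets reduces to $\langle D_{+}\rangle=(-A^{-3})^{-2}\langle D_{-}\rangle=A^{6}\langle D_{-}\rangle$. Substituting the two classical expansions gives $Ax+A^{-1}y=A^{5}x+A^{7}y$, that is, $A(1-A^{4})\,x=-A^{-1}(1-A^{4})(1+A^{4})\,y$. Since $\mathbb{Z}[A^{\pm1}]$ is an integral domain and $1-A^{4}\neq 0$, I may cancel $1-A^{4}$ to obtain $x=-(A^{2}+A^{-2})\,y=dy$.

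Finally I would substitute $x=dy$ back. This yields $\langle D_{\blacksquare}\rangle=Vdy+(1-Vd)y=y$, while $\langle D_{+}\rangle=(Ad+A^{-1})y=-A^{3}y$ and $\langle D_{-}\rangle=(A^{-1}d+A)y=-A^{-3}y$. Hence $\langle D_{\blacksquare}\rangle=y$ divides $\langle D_{+}\rangle=-A^{3}\langle D_{\blacksquare}\rangle$ and $\langle D_{-}\rangle=-A^{-3}\langle D_{\blacksquare}\rangle$; in fact all three brackets are associates in $\mathbb{Z}[A^{\pm1}]$, which is a shade stronger than the statement.

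The crux is not the algebra but the bookkeeping in the first step: one must verify that the smoothing carrying the coefficient $V$ in \eqref{skeinpseudo} is exactly the smoothing of $c$ carrying $A$ in $\langle D_{+}\rangle$ (equivalently $A^{-1}$ in $\langle D_{-}\rangle$). Were the matching reversed, the same computation would give $\langle D_{\blacksquare}\rangle=\bigl(d+V(1-d^{2})\bigr)y$, which has $V$-degree $1$ and therefore cannot divide the $V$-free Laurent polynomial $\langle D_{+}\rangle$; so the result genuinely depends on this convention. Once it is confirmed, the writhe count, the cancellation of $1-A^{4}$, and the final substitution are routine.
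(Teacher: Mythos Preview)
Your proof is correct and follows essentially the same route as the paper's: expand at $c$, use the writhe shift of $2$ under the crossing change to relate $\langle D_{+}\rangle$ and $\langle D_{-}\rangle$, solve the resulting linear system for the two smoothings, and substitute into the pseudo expansion. Your $x,y$ are the paper's $\langle K_V\rangle,\langle K_H\rangle$, and your conclusion $\langle D_{\blacksquare}\rangle=y$, $\langle D_{\pm}\rangle=-A^{\pm3}y$ is exactly what the paper obtains (with $y=G$); your explicit remark that the three brackets are in fact associates, and your caveat about the $V$/$A$ smoothing convention, are welcome additions.
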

\begin{proof}
Suppose $D_{+} \sim D_{-}$ and that $D_{+} $ and $ D_{-}$ are related by a single crossing change.
Let $w(D_{+})=w +1 $ then $w-1 = w(D_{-})$. 
Let $K$ be a diagram equivalent to $D_{+}$ with writhe $w$.
 
We use $ G $ to denote $ \langle K \rangle$.
 The normalized $f$-polynomials of $K, D_{+}$, and $D_{-}$ are equivalent:
\begin{align}\label{normpolys}
G(-A^{-3})^w &=  \langle D_{+} \rangle (-A^{-3})^{w+1} & G(-A^{-3})^w&= \langle D_{-} \rangle (-A^{-3})^{w-1}.
\end{align}
 Reducing Equation \ref{normpolys},
\begin{align*}
G&=  \langle D_{+} \rangle (-A^{-3}) & G&= \langle D_{-} \rangle (-A^{3}) .
\end{align*}
 We conclude that 
\begin{align} \label{conpolys}
\langle D_{+} \rangle &= G (-A^3) & \langle D_{-} \rangle &= G (-A^{-3}) .
\end{align}

Partially expand the diagrams $D_+$ and $D_-$ at the selected crossing.
\begin{align} \label{partialexpansion1} 
\langle D_{+} \rangle &= A \langle K_v \rangle  + A^{-1} \langle K_H \rangle ,  \\
\langle D_{-} \rangle &= A^{-1} \langle K_v \rangle + A \langle K_H \rangle . \label{partialexpansion2}
\end{align} 
Then  substitute Equation \ref{conpolys} into Equations \ref{partialexpansion1} and \ref{partialexpansion2}.
\begin{align} \label{rewrite1} 
G (-A^3) &= A \langle K_v \rangle  + A^{-1} \langle K_H \rangle,  \\
G(-A^{-3})  &= A^{-1} \langle K_v \rangle + A \langle K_H \rangle . \label{rewrite2} 
\end{align} 
Multiplying through Equations \ref{rewrite1} and \ref{rewrite2}:
\begin{align} \nonumber
G (-A^2) &= \langle K_v \rangle  + A^{-2} \langle K_H \rangle , \\ \nonumber
G(-A^{-2}) &=  \langle K_v \rangle + A^2 \langle K_H \rangle .
\end{align}

Eliminate $K_V$ from the system of equations:
\begin{align} \label{KHequation}
G(-A^2 + A^{-2}) &= (A^{-2} - A^2) \langle K_H \rangle. 
\end{align}
Reducing Equation \ref{KHequation}, we obtain
\begin{equation} \label{khval}
G = \langle K_H \rangle .
\end{equation}
Using the fact that $ \langle D_{+} \rangle = G (-A^3)$ and Equation \ref{khval},
\begin{align*} 
G (-A^3) &= A \langle K_V \rangle + A^{-1} \langle K_H \rangle \\
G (-A^3) &= A \langle K_V \rangle + A^{-1} G \\
G (-A^3 -A^{-1}) &= A \langle K_V \rangle  \\
G  d &= \langle K_V \rangle.
\end{align*}
As a result, 
\begin{equation*}
\langle K_V \rangle = d G  \text{ and } \langle K_H \rangle = G .
\end{equation*}
Apply the result to the expansion of $ D_{\blacksquare}$:
\begin{align*}
\langle D_{\blacksquare} \rangle &= V \langle K_V \rangle + (1 - V d) \langle K_H \rangle \\
&= V d G + (1- V d) G \\
&= G .
\end{align*} 

Then $ \langle D_{\blacksquare} \rangle $ divides  
$ \langle D_{+} \rangle $ and $ \langle D_{-}  \rangle $. 
\end{proof} 

We obtain the following corollary.

\begin{corollary}For all  knot diagrams $D$ with a cosmetic crossing $c$, 
$ \langle D_{ \blacksquare} \rangle $ has no summands with a power of $V$.  \end{corollary}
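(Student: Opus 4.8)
The plan is to read the corollary straight off the proof of the preceding (obstruction) theorem. That proof establishes, under the cosmetic hypothesis, the two identities $\langle K_H\rangle = G$ and $\langle D_{\blacksquare}\rangle = G$, hence $\langle D_{\blacksquare}\rangle = \langle K_H\rangle$. The only remaining point is to observe that the right-hand side is a Laurent polynomial in $A$ alone, with no monomial involving $V$.

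First I would note that $K_H$ is a genuine classical diagram: it is obtained from the classical knot diagram $D_+$ by $A^{-1}$-smoothing the selected crossing, and smoothing a classical crossing introduces no pseudo crossings, so every crossing of $K_H$ is classical. Next I would invoke the observation already recorded in the proof of the invariance theorem, namely that on a diagram with only classical crossings the pseudo bracket coincides with the Kauffman bracket ($P_K = f_K$ for classical $K$). Concretely, every crossing is resolved via Equations~(\ref{skeinpos}) and~(\ref{skeinneg}) using only the coefficients $A^{\pm 1}$, and the loop reductions~(\ref{reducebracket}) only introduce powers of $d = -A^2 - A^{-2}$; the variables $V$ and $H = 1 - Vd$ enter the pseudo bracket exclusively through Equation~(\ref{skeinpseudo}), i.e.\ only at pseudo crossings. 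Since $K_H$ has none, $\langle K_H\rangle \in \mathbb{Z}[A, A^{-1}]$, and therefore $\langle D_{\blacksquare}\rangle = \langle K_H\rangle$ carries no power of $V$.

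There is no substantive obstacle here: the corollary is essentially a restatement of the identity $\langle D_{\blacksquare}\rangle = \langle K_H\rangle$ from the theorem together with the fact that a classical diagram has a $V$-free bracket. The only thing worth being careful about is the bookkeeping inside the theorem itself — keeping $K_v$ and $K_H$ fixed as the two smoothings of the \emph{selected} crossing throughout the elimination, so that the $K_H$ appearing in $\langle D_{\blacksquare}\rangle = \langle K_H\rangle$ is indeed the classical diagram just described — but this is already handled in the proof above, so the present argument is immediate.
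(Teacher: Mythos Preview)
Your argument is correct and matches the paper's implicit reasoning: the corollary is stated without proof because the theorem already shows $\langle D_{\blacksquare}\rangle = G$, where $G = \langle K\rangle$ for a classical diagram $K$, hence a Laurent polynomial in $A$ alone. Your route via $\langle K_H\rangle$ is an equally valid (and equivalent) way to see the same thing, since the theorem also gives $\langle K_H\rangle = G$.
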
 
\begin{example}
The obstruction is demonstrated using the trefoil knot. By symmetry, none of the crossings are cosmetic. 
\begin{align*} 
\Bigg \langle 
\begin{array}{c} \scalebox{0.3}{}
\end{array} \Bigg \rangle  &=-A^{-8} V + A^{-6} -A^4 V  . \\
\Bigg \langle  \begin{array}{c} \scalebox{0.3}{
\begingroup%
  \makeatletter%
  \providecommand\color[2][]{%
    \errmessage{(Inkscape) Color is used for the text in Inkscape, but the package 'color.sty' is not loaded}%
    \renewcommand\color[2][]{}%
  }%
  \providecommand\transparent[1]{%
    \errmessage{(Inkscape) Transparency is used (non-zero) for the text in Inkscape, but the package 'transparent.sty' is not loaded}%
    \renewcommand\transparent[1]{}%
  }%
  \providecommand\rotatebox[2]{#2}%
  \ifx\svgwidth\undefined%
    \setlength{\unitlength}{89.80767717bp}%
    \ifx\svgscale\undefined%
      \relax%
    \else%
      \setlength{\unitlength}{\unitlength * \real{\svgscale}}%
    \fi%
  \else%
    \setlength{\unitlength}{\svgwidth}%
  \fi%
  \global\let\svgwidth\undefined%
  \global\let\svgscale\undefined%
  \makeatother%
  \begin{picture}(1,0.95581325)%
    \put(0,0){\includegraphics[width=\unitlength,page=1]{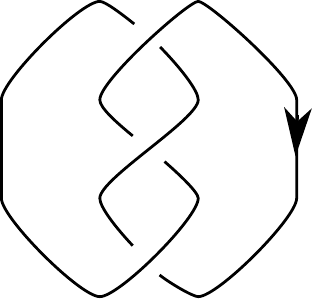}}%
  \end{picture}%
\endgroup%
}
\end{array} \Bigg \rangle &= A^{-7} -A^{-3} - A^5 .
\end{align*}
\end{example}
\begin{example} 
We consider the first non-alternating classical knot, $K_{11n1}$ \cite{knotinfo}, shown in Figure \ref{fig:knot11n1}. In Figure \ref{fig:knot11n1f}, we select a crossing to construct 
$ K_{11n1 \blacksquare} $. 
\begin{figure}
\begin{subfigure}{0.48\linewidth}
\[  \begin{array}{c}
\scalebox{0.5}{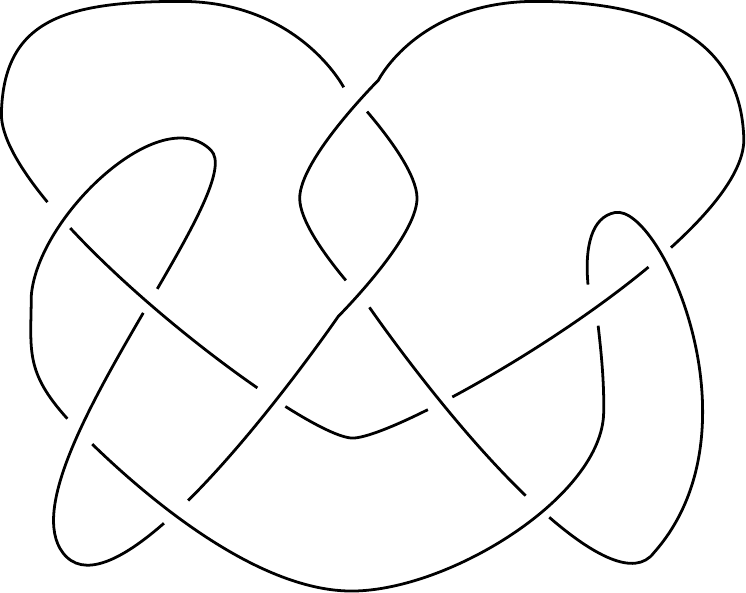} \end{array} \]
\caption{$K_{11n1}$}
\label{fig:knot11n1}
\end{subfigure}
\begin{subfigure}{0.48\linewidth}
\[  \begin{array}{c}
\scalebox{0.5}{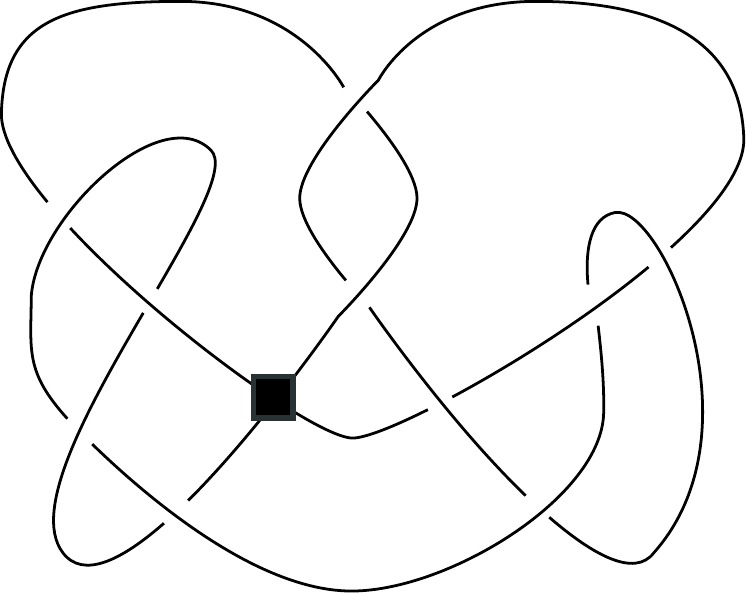} \end{array} \]
\caption{$K_{11n1 \blacksquare}$}
\label{fig:knot11n1f}
\end{subfigure}
\caption{Example 2}
\label{fig:example2}
\end{figure}

\begin{equation}
\langle K_{11n1} \rangle = 
- \frac{(1 + 2 A^8 - A^{12} + A^{28} - A^{32} + A^{36} - A^{40})}{(A^{17} (1 + A^4))}
\end{equation} 

\begin{align} 
\langle K_{11n1 \blacksquare} \rangle &=A^{-24} (A^2 - 3 A^6 + 5 A^{10} - 7 A^{14} + 9 A^{18} - 9 A^{22} + 8 A^{26}   - 6 A^{30}  \\ \nonumber & + 4 A^{34} - 2 A^{38}  
 + A^{42} + V - 3 A^4 V + 4 A^8 V - 
   6 A^{12} V \\ \nonumber
  & + 6 A^{16} V - 5 A^{20} V + 4 A^{24} V - 2 A^{28} V  
 + A^{36} V -   
   A^{40} V + A^{44} V)
\end{align}

This calculation determines that the selected crossing in $K_{11n1}$ is not cosmetic.
\end{example} 

\section{Conclusion} 
The pseudo bracket polynomial is not only an invariant of psuedo knots, but also provides a computable obstruction to cosmetic crossings.

\bibliographystyle{plain}
\bibliography{cosmetic}

\end{document}